\newtheorem{prop}{Proposition}
\newtheorem{defn}[prop]{Definition}
\newtheorem{lemma}[prop]{Lemma}
\newtheorem{corollary}[prop]{Corollary}
\newcommand{\N}{\mathbb{N}}
\newcommand{\Z}{\mathbb{Z}}
\newcommand{\R}{\mathbb{R}}
\newcommand{\C}{\mathbb{C}}
\newcommand{\ckj}{c_k(j)}
\newcommand{\ckjgen}{c_k^{(s)}(j)}
\newcommand{\mukd}{\mu(\frac{k}{d})}
\newcommand{\kbyds}{\frac{k}{d^s}}
\newcommand{\ghi}{\frac{J_s(k)\mu(k/(j, k^s)_s)}{J_s(k/(j,k^s)_s)}}
\newcommand{\muksds}{\mu(\frac{k^s}{d^s})}
\newcommand{\jsksds}{J_s(\frac{k^s}{d^s})}
\newcommand{\ksds}{\frac{k^s}{d^s}}
\newcommand{\toth}{T\'{o}th}
\begin{document}
\title{Certain weighted averages of generalized Ramanujan sums}
\author{K Vishnu Namboothiri}
\thanks{Baby John Memorial Government College, Chavara, Kollam, Kerala, India\\Affiliated to the University of Kerala, Thiruvananthapuram\\Department of Collegiate Education, Government of Kerala\\
Email : \texttt{kvnamboothiri@gmail.com}\\
The author acknowledges support from the University Grants Commission of India under its Minor Research Project Grant scheme (XII Plan) .}
\maketitle
\begin{abstract}
We derive certain identities involving various known arithmetical functions and a generalized version of Ramanujan sum. L. \toth\ constructed certain weighted averages of Ramanujan sums with various arithmetic functions as weights.  We choose a generalization of Ramanujan sum given by E. Cohen and derive the weighted averages corresponding to the versions of the weighted averages established by \toth.\\
\textsc{Keywords}: Generalized Ramanujan sum, weighted averages, Jordan totient function, Gamma function, Bernoulli numbers, Holder evaluation\\
\textsc{Mathematics Subject Classification 2010}: 11A25  11B68  33B15

\end{abstract}

\section{Introduction}
Ramanujan sum is a certain sum of powers of roots of unity defined and used seriously for the first time in the literature by Srinivasa Ramanujan. His paper \emph{On certain trigonometric sums} published in 1918 (\cite{ramanujan}) discusses various infinite series expressions for many well known arithmetical functions in the form of a power series. His interest in the sums originated in his desire to ``obtain expressions for a variety of well-known arithmetical functions of $n$ in the form of a series $\sum_s a_sc_s(n)$''. There have been a lot of analysis dealing with this particular aspect of Ramanujan sums (See, for example, \cite{lucht}).  On the other hand, it found immense use in in classical character theory. It was proved that the sums can be used to establish the integrality of the character values for the symmetric group (\cite{james}). However,  perhaps the most famous appearance of Ramanujan sums is their crucial role in Vinogradov's proof (See \cite{nathan}) that every sufficiently large odd number is the sum of three primes.

In more recent years, Ramanujan sums have appeared in various other seemingly unrelated problems. Being interested in the applications of the Ramanujan sums, many mathematicians later tried to generalize it to find more and more applications. One of the most popular generalization was given by E. Cohen. After that, various other generalizations were discussed in many papers including that of C. S. Venkataraman and R. Sivaramakrisnan \cite{venkata} and  J. Chidambaraswamy (\cite{chid}).

In this paper we are interestd in establishing certain weighted averages of generalized Ramanujan sums. The generalization we choose was given by E. Cohen. We remark that our identities are based on similar kind of averages derived by  L. \toth\ in \cite{toth} using the classical Ramanujan sums. In between, we derive another identity using the Jordan totient function which could be of independent interest.

\section{Notations}

Most of the notations, functions, and identities we introduce below are standard and can be found in \cite{apostol1} or \cite{cohen4}. We use the following standard notations:  $\N$ the set of all natural numbers, $\Z$ the set of all integers, $\R$ the set of all real numbers and $\C$ the set of all complex numbers. $\mu$ will denote the Mobius function on $\N$. Note that for any given $k\in\N$,  we have $$\sum_{d|k}\mu(d) =0 $$
Let $\lfloor x\rfloor$ denote the integer part of $x$, and   $\tau(n)$ and $\sigma(n)$ denote the number and sum of divisors
of $n$ respectively.
For two arithmetical functions $f$ and $g$, $f*g$ denote their Dirichlet
convolution (Dirichlet product). Note that this product is commutative. 
The arithmetic function $N$ is defined as, for $n\in\N$, $$N^\alpha(n) := n^\alpha$$ for any $\alpha\in\R$. For $\alpha=1$, we will simply write $N(n)$ instead of writing $N^1(n)$, and $N^0(n) :=1$ for all $n$.

The von Mangoldt function over $\N$ is denoted by $\Lambda(n)$. The Euler totient function is denoted by $\varphi$ and it satisfies the identity \begin{equation*}
\varphi(n) = n\prod_{\substack{p|n\\p\text{ prime}}}\left(1-\frac{1}{p}\right)
\end{equation*} 

A well known generalization of this identity gives the Jordan totient function of order $s\in\N$:
\begin{equation*}
J_s(n) := n^s\prod_{\substack{p|n\\p\text{ prime}}}\left(1-\frac{1}{p^s}\right)
\end{equation*} 
where $n\in\N$.

We would be using the following identity for the Jordan totient function:
\begin{prop}(\cite{apostol1}, Chapter 2, Exer. 17)
For $n\geq 1$, we have $$J_s(n) = \sum_{d|n} d^s\mu(n/d) = \sum_{d|n} (n/d)^s\mu(d)$$
\end{prop}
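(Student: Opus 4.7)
My plan is to prove the identity in the form $J_s(n) = \sum_{d\mid n}(n/d)^s \mu(d)$ first, by direct manipulation of the product definition; the second equality then follows from the trivial change of variable $d \mapsto n/d$, since as $d$ runs over the divisors of $n$, so does $n/d$.

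Starting from $J_s(n) = n^s \prod_{p\mid n}(1 - p^{-s})$, where the product is over the distinct primes dividing $n$, I would expand the finite product by distributing. If the distinct prime divisors of $n$ are $p_1, \dots, p_r$, then
\begin{equation*}
\prod_{i=1}^{r}\left(1 - \frac{1}{p_i^s}\right) = \sum_{I \subseteq \{1,\dots,r\}} \frac{(-1)^{|I|}}{\prod_{i\in I} p_i^s} = \sum_{d \mid p_1 p_2 \cdots p_r} \frac{\mu(d)}{d^s},
\end{equation*}
using the characterization $\mu(d) = (-1)^{|I|}$ on squarefree integers $d = \prod_{i\in I} p_i$.

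The key observation that finishes the identification is that $\mu$ vanishes on non-squarefree integers, so extending the range of summation from divisors of the radical $p_1 \cdots p_r$ to all divisors of $n$ costs nothing:
\begin{equation*}
\sum_{d \mid p_1\cdots p_r} \frac{\mu(d)}{d^s} = \sum_{d\mid n} \frac{\mu(d)}{d^s}.
\end{equation*}
Multiplying through by $n^s$ gives $J_s(n) = \sum_{d\mid n}(n/d)^s \mu(d)$, and the substitution $d \mapsto n/d$ on the divisor set yields the twin expression $\sum_{d\mid n} d^s \mu(n/d)$.

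There is no real obstacle here; the only subtlety worth flagging is the step where the product over primes is reinterpreted as a sum over squarefree divisors and then silently extended to all divisors, exploiting the vanishing of $\mu$. As an alternative route one could observe that both sides define multiplicative functions of $n$ (the right-hand side being the Dirichlet convolution $N^s * \mu$) and verify equality on prime powers $p^k$, where both sides equal $p^{ks} - p^{(k-1)s}$; I would mention this only if a multiplicativity-based argument is preferred for uniformity with later proofs in the paper.
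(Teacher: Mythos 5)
Your proof is correct: expanding the Euler-type product over the distinct primes of $n$ into a sum over squarefree divisors via $\mu$, extending to all divisors of $n$ because $\mu$ vanishes off squarefree integers, and then swapping $d \leftrightarrow n/d$ is the standard argument and has no gaps. The paper itself offers no proof of this proposition --- it is quoted directly from Apostol (Chapter 2, Exercise 17) --- so there is nothing to compare against; your write-up would serve as a self-contained justification, and either your direct expansion or the multiplicativity-on-prime-powers alternative you mention would do.
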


For $x\in\R, x>0$, the Gamma function $\Gamma$ is defined by 
\begin{equation*}
\Gamma(x) = \int_{0}^{\infty}e^{-t}t^{x-1} dt
\end{equation*}

Now we recall the definition of the Bernoulli Polynomials and Bernoulli numbers. 
For any $x\in\C$ define the functions $B_n(x)$ by the equation 
\begin{equation*}
\frac{ze^{xz}}{e^z-1} = \sum_{n=0}^{\infty}\frac{B_n(x)}{n!}z^n\text{ where } |z|<2 \pi
\end{equation*}
If we put $x=0$, we get (with the convention that $B_n :=B_n(0)$), then
\begin{equation*}
\frac{z}{e^z-1} = \sum_{n=0}^{\infty}\frac{B_n}{n!}z^n\text{ where } |z|<2 \pi
\end{equation*}
$B_n(x)$ are called Bernoulli polynomials and $B_n=B_n(0)$ are called Bernoulli numbers.

Following are some important properties of Bernoulli polynomials (See section 12.2 of \cite{apostol1}) which we use later:
\begin{prop}
For $n\in\N, n\geq 2$, the Bernoulli numbers satisfy
\begin{enumerate}
\item $B_n(0)=B_n(1)$, and 
\item $B_n = \sum_{k = 0}^n \binom{n}{k}B_k$
\end{enumerate}
\end{prop}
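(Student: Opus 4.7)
The plan is to work directly from the two generating function identities given in the excerpt, manipulate them, and read off coefficients of $z^n$ to obtain both parts of the proposition.

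For part~(1), I would form the difference of the generating function at $x=1$ and at $x=0$. A direct simplification gives
$$\frac{ze^z}{e^z-1} - \frac{z}{e^z-1} = \frac{z(e^z-1)}{e^z-1} = z,$$
so that
$$\sum_{n=0}^{\infty}\frac{B_n(1)-B_n(0)}{n!}\,z^n = z.$$
Matching coefficients of $z^n$ on both sides shows that $B_n(1)=B_n(0)$ for every $n\neq 1$, which in particular covers the range $n\geq 2$ asserted in the proposition. The $n=1$ coefficient is the only nonzero term on the right-hand side, accounting for the familiar shift $B_1(1)-B_1(0)=1$ but playing no role in what we need.

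For part~(2), the idea is to combine part~(1) with a second way of expanding the generating function at $x=1$. Since part~(1) allows us to rewrite $B_n=B_n(0)=B_n(1)$ for $n\geq 2$, it suffices to find a closed form for $B_n(1)$. I would factor
$$\frac{ze^z}{e^z-1} = e^z\cdot\frac{z}{e^z-1} = \left(\sum_{m=0}^{\infty}\frac{z^m}{m!}\right)\left(\sum_{k=0}^{\infty}\frac{B_k}{k!}\,z^k\right),$$
take the Cauchy product of the two series, and compare the coefficient of $z^n$ with that in the series $\sum_n B_n(1)z^n/n!$. This yields
$$\frac{B_n(1)}{n!} = \sum_{k=0}^{n}\frac{B_k}{k!\,(n-k)!},$$
which is precisely $B_n(1)=\sum_{k=0}^{n}\binom{n}{k}B_k$. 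Substituting the equality $B_n=B_n(1)$ from part~(1) delivers the desired identity for $n\geq 2$.

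There is no substantive obstacle here; the only care required is the factorial bookkeeping in the Cauchy product, where the binomial coefficient $\binom{n}{k}$ arises precisely because we are comparing coefficients of $z^n/n!$ rather than of $z^n$. Both parts are routine exercises in power-series manipulation, and the logical dependence is clear: part~(2) is proved by invoking part~(1) together with the $e^z\cdot z/(e^z-1)$ factorisation.
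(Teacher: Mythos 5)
Your proof is correct. The paper does not prove this proposition at all --- it is quoted as a standard fact with a pointer to Section 12.2 of Apostol --- and your generating-function argument (differencing the generating function at $x=1$ and $x=0$ for part (1), and the Cauchy product of $e^z$ with $z/(e^z-1)$ for part (2)) is precisely the standard derivation given in that reference, so there is nothing to reconcile.
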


Next we define the generalized gcd function: 
\begin{defn}
The generalized GCD function on $\N\times\N$  denoted by $(j, k^s)_s$ is defined to  give the largest $d\in\N$ such that $d|k$ and $d^s | j$. Therefore,  $(j, k^s)_s = 1$ means that no natural number $d\geq 1$ exists such that $d|k$ and $d^s|j$.
\end{defn}
When $s=1$, the generalized GCD function becomes the usual GCD function.

For $k\in\N,  j\in\Z$, the Ramanujan sum defined as the sum
\begin{equation*}
c_k(j)=\sum_{\substack{m=1\\gcd(m,k)=1}}^{k}e^{2\pi ijm/k}
\end{equation*}

In \cite{cohen1}, E. Cohen generalized this to
\begin{defn}\label{defn:genrs1}
For $s, k\in\N, j\in \Z$, the generalized Ramanujan sum is defined as \begin{equation*}
c_k^{(s)}(j) = \sum_{\substack{m=1\\(j,k^s)_s=1}}^{k}e^{2\pi ijm/k^s}
\end{equation*}
\end{defn}

Recall that, for Ramanujan sum, we have $$c_k(j) = \sum_{d|(k,j)} d\mu(\frac{k}{d}) = \sum_{d|(k,j)} \frac{k}{d}\mu(d)$$ The following similar identity holds for the generalized Ramanujan sum (\cite{cohen1} Section 2):
\begin{prop}
\begin{equation}\label{eq:rs-gen-mu}
c^{(s)}_k(j) = \sum_{d|(j,k^s)_s} d^s\mu(\frac{k}{d}) = \sum_{\substack{d|j^s\\d|k}} d^s\mu(\frac{k}{d}) 
\end{equation}
\end{prop}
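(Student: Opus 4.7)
The plan is to mimic the classical derivation of the Möbius–inversion formula for $c_k(j)$, but working modulo $k^s$ rather than $k$. I read the sum in Definition~\ref{defn:genrs1} as running over a complete residue system modulo $k^s$ with the coprimality condition $(m,k^s)_s=1$, since this is what makes the character–sum manipulations work and matches Cohen's original setup.

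First I would record the key structural fact about the generalized gcd: the divisors of $(j,k^s)_s$ are exactly the natural numbers $d$ with $d\mid k$ and $d^s\mid j$. Indeed, by definition $(j,k^s)_s$ itself satisfies these two conditions, and any $d$ dividing it inherits them; conversely, any $d$ with $d\mid k$ and $d^s\mid j$ divides the largest such number, namely $(j,k^s)_s$. Combined with the Möbius identity $\sum_{d\mid n}\mu(d)=[n=1]$, this yields
\begin{equation*}
[(m,k^s)_s=1]\;=\;\sum_{\substack{d\mid k\\ d^s\mid m}}\mu(d).
\end{equation*}

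Next I would insert this into the definition of $c_k^{(s)}(j)$ and interchange the order of summation to obtain
\begin{equation*}
c_k^{(s)}(j)\;=\;\sum_{d\mid k}\mu(d)\sum_{\substack{1\le m\le k^s\\ d^s\mid m}}e^{2\pi i jm/k^s}.
\end{equation*}
Writing $m=d^s m'$, the inner sum becomes $\sum_{m'=1}^{(k/d)^s}e^{2\pi i jm'/(k/d)^s}$, which by the standard orthogonality of additive characters equals $(k/d)^s$ when $(k/d)^s\mid j$ and vanishes otherwise. Consequently only those $d\mid k$ contribute for which $(k/d)^s\mid j$, and setting $e=k/d$ I would obtain
\begin{equation*}
c_k^{(s)}(j)\;=\;\sum_{\substack{e\mid k\\ e^s\mid j}} e^{s}\,\mu(k/e).
\end{equation*}

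Finally, applying the structural fact recorded in the first step rewrites this sum as $\sum_{e\mid (j,k^s)_s} e^{s}\mu(k/e)$, which is the first equality of~\eqref{eq:rs-gen-mu}; the second equality is just the same sum expressed by the equivalent divisibility conditions on $e$. The only genuinely delicate point I foresee is the bookkeeping of the range of summation when passing from $m$ to $d^sm'$; everything else reduces to the Möbius identity and the elementary evaluation of a geometric character sum.
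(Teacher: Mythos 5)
The paper states this proposition without proof, citing Section 2 of Cohen's paper, so there is no internal argument to compare against; judged on its own, your proof is correct and is the standard derivation. You rightly repair the typographical slips in the paper's definition of the generalized Ramanujan sum (the sum must run over $1\le m\le k^s$ with the condition $(m,k^s)_s=1$, not over $m\le k$ with a condition on $j$), and your chain --- detecting the condition $(m,k^s)_s=1$ via the M\"obius sum $\sum_{d\mid k,\ d^s\mid m}\mu(d)$, interchanging the order of summation, substituting $m=d^sm'$, and evaluating the resulting complete exponential sum modulo $(k/d)^s$ --- yields exactly $\sum_{e\mid k,\ e^s\mid j}e^s\mu(k/e)$. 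One small point deserves an explicit line: in your ``structural fact'' the converse direction (every $e$ with $e\mid k$ and $e^s\mid j$ divides $(j,k^s)_s$) is not automatic from the definition of a maximum; it holds because the set of such $e$ is closed under least common multiples, since $\mathrm{lcm}(e_1,e_2)^s=\mathrm{lcm}(e_1^s,e_2^s)$, so the maximal element is divisible by every other one. Note also that your final form with the condition $e\mid k$, $e^s\mid j$ is precisely what the paper uses in all subsequent computations (e.g.\ the substitution $j=nd^s$ with $1\le n\le k^s/d^s$ in the proof of Alkan's identity), which indicates that the condition $d\mid j^s$ displayed in the statement is itself a typo for $d^s\mid j$; your argument proves the corrected statement.
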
 
Holder's (See \cite{holder}) evaluation of  the Ramanujan sums is given by 
\begin{equation*}
c_k(j)= \frac{\varphi(k)\mu(k/gcd(k,j))}{\varphi(k/gcd(k,j))}\text{ where }k\in \N, j\in \Z
\end{equation*}
For the generalized sum,  Cohen showed that(\cite{cohen3}, section 2)
\begin{prop}{(Generalized Holder's identity)}
\begin{equation}\label{ghi}
{c_k}^{(s)}(j)= \frac{J_s(k)\mu(k/(j, k^s)_s)}{J_s(k/(j,k^s)_s)} \text{ where } j, k,s\in \N
\end{equation}
\end{prop}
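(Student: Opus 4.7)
The plan is to deduce equation~\eqref{ghi} directly from \eqref{eq:rs-gen-mu}. Set $m := (j, k^s)_s$, so that $m \mid k$; the identity to be established becomes
\begin{equation*}
\sum_{d \mid m} d^s \mu(k/d) = \frac{J_s(k)\,\mu(k/m)}{J_s(k/m)}.
\end{equation*}
My strategy is to show that both sides are multiplicative functions of $k$ (with $j$ and $s$ fixed) and then to verify the identity on prime powers.

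For the multiplicativity step, take $k = k_1 k_2$ with $\gcd(k_1, k_2) = 1$. Every divisor $d$ of $k$ factors uniquely as $d = d_1 d_2$ with $d_i \mid k_i$, and since $\gcd(d_1, d_2) = 1$ we have $d^s \mid j$ if and only if $d_1^s \mid j$ and $d_2^s \mid j$. This gives the factorization $(j, k^s)_s = (j, k_1^s)_s\,(j, k_2^s)_s$, so $m$ and $k/m$ are multiplicative in $k$. The left-hand side then splits by the usual argument for Dirichlet-type convolutions, and the right-hand side is assembled from multiplicative functions, so it is enough to treat $k = p^a$ and $m = p^b$ with $0 \le b \le a$.

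On a prime power, the left-hand side reduces to $\sum_{i=0}^{b} p^{is}\mu(p^{a-i})$, which is nonzero only when $a - i \in \{0, 1\}$. A short case analysis handles the three possibilities: if $b = a$, the sum equals $p^{as} - p^{(a-1)s} = J_s(p^a)$, matching the right-hand side since $\mu(1)/J_s(1) = 1$; if $b = a - 1$, only the $i = a - 1$ term survives and yields $-p^{(a-1)s}$, which agrees with $J_s(p^a)\mu(p)/J_s(p) = -p^{(a-1)s}(p^s - 1)/(p^s - 1)$; and if $b \le a - 2$, both sides vanish, the left-hand side trivially and the right-hand side because $\mu(p^{a-b}) = 0$.

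The main obstacle is the bookkeeping for the multiplicativity step: one must verify that the extra constraint $d^s \mid j$ respects coprime factorizations of $k$, so that both $(j, k^s)_s$ and the sum in \eqref{eq:rs-gen-mu} split properly across the factors. Once this is in place, the reduction to prime powers turns the identity into the elementary computation sketched above; alternatively, one can simply appeal to the treatment in \cite{cohen3}.
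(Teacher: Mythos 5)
Your argument is correct, but it is worth noting that the paper does not actually prove this proposition at all: it is stated as a result of Cohen and justified only by the citation to \cite{cohen3}, where it is derived within Cohen's theory of even functions $\pmod{k^s}$. So you are supplying a self-contained proof where the paper offers none, and the route you take --- reducing via \eqref{eq:rs-gen-mu} to the identity $\sum_{d\mid m} d^s\mu(k/d) = J_s(k)\mu(k/m)/J_s(k/m)$ with $m=(j,k^s)_s$, establishing multiplicativity in $k$, and checking prime powers --- is sound. The prime-power case analysis is complete and correct. The only point deserving a word more of care is the one you flag yourself: to use \eqref{eq:rs-gen-mu} as a sum over \emph{all} divisors of $m$, and to get $(j,(k_1k_2)^s)_s=(j,k_1^s)_s\,(j,k_2^s)_s$, you need that the set $D=\{d: d\mid k,\ d^s\mid j\}$ is exactly the set of divisors of its largest element; this holds because $D$ is closed under divisors and under least common multiples (for each prime $p$, $v_p(\mathrm{lcm}(d_1,d_2)^s)=\max(sv_p(d_1),sv_p(d_2))\le v_p(j)$), so its maximum is the lcm of all its elements and every element divides it. With that observation spelled out, your proof is a clean, elementary alternative to chasing the reference; what the citation buys instead is Cohen's more structural derivation, which situates the identity inside a general theory rather than verifying it by local computation.
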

\begin{defn}
For $k, n\in \N$, the function $\theta_k(n)$ is defined as $$\theta_k(n) := \begin{cases}
1, \text {if } (k,n)=1\\
0, \text {if } (k,n) >1
\end{cases}$$
\end{defn}

Let $\mathcal{F}$ denote the $\C-$linear space of all arithmetic functions with the usual operations.
\begin{defn}
An arithmetic function $f\in\mathcal{F}$ is said to be $k-$periodic for a fixed $k\in\N$ if $f(n+k) = f(k)$ for all $n\in\N$.
\end{defn}
The following proposition which we use later, can be found, for example, in \cite{schwarz}.
\begin{prop} 
For $k\in\N$, every $k-$periodic $f\in\mathcal{F}$ has a Fourier expansion of the form $$f(k) = \sum_{j=1}^{k} g(j) exp(2\pi i j n/k)$$ where the Fourier coefficients $g(j)$ are unique, and are infact given by the equation $$g(m) =\frac{1}{k}\sum_{j=1}^{k} f(k) exp(-2\pi i j n/k)$$
\end{prop}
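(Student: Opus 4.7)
The plan is to exploit the orthogonality of the additive characters $n\mapsto \exp(2\pi i jn/k)$ on $\Z/k\Z$. Since a $k$-periodic arithmetic function is determined by its $k$ values $f(1),\ldots,f(k)$, the space of such functions is $k$-dimensional over $\C$, and I would establish that the $k$ exponentials $e_j(n):=\exp(2\pi i jn/k)$ for $j=1,\ldots,k$ form a basis.

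The first step is the character orthogonality identity
\begin{equation*}
\sum_{n=1}^{k}\exp\!\left(\frac{2\pi i (j-m)n}{k}\right)=\begin{cases} k & \text{if } k\mid j-m,\\ 0 & \text{otherwise},\end{cases}
\end{equation*}
which I would prove by summing a geometric series: if $\omega=\exp(2\pi i(j-m)/k)\neq 1$ then $\sum_{n=1}^{k}\omega^n = \omega(\omega^k-1)/(\omega-1) = 0$ since $\omega^k=1$, while for $j\equiv m\pmod{k}$ every term equals $1$.

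Next, I would verify existence of the expansion by direct computation. Define $g(m):=\frac{1}{k}\sum_{n=1}^{k}f(n)\exp(-2\pi imn/k)$ and substitute into the proposed expansion; after interchanging the two finite sums, the inner sum over $j$ collapses via the orthogonality identity above to $k$ times a Kronecker delta, which recovers $f(n)$. For uniqueness, if two sets of coefficients $g_1,g_2$ reproduce $f$, then their difference $h=g_1-g_2$ satisfies $\sum_{j=1}^{k}h(j)\exp(2\pi ijn/k)=0$ for all $n=1,\ldots,k$. This is a homogeneous linear system whose coefficient matrix is Vandermonde in the $k$ distinct roots of unity $\exp(2\pi ij/k)$, hence invertible, so $h\equiv 0$; alternatively, applying the recovery formula just derived to the zero function forces $h\equiv 0$ directly, and moreover pins down the explicit formula for $g(m)$ stated in the proposition.

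There is no serious obstacle, as this is a standard discrete Fourier inversion result. The only care needed is bookkeeping the index ranges: the orthogonality condition ``$k\mid j-m$'' over $j,m\in\{1,\ldots,k\}$ reduces to $j=m$, making the basis vectors genuinely distinct, and the swap of the two finite sums in the verification is unconditionally legitimate.
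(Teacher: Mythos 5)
Your proof is correct: the orthogonality relation via the geometric series, the existence of the expansion by substituting the coefficient formula and swapping the two finite sums, and uniqueness via the inversion formula (or the Vandermonde/nonsingularity argument) constitute the standard discrete Fourier inversion argument for $k$-periodic functions. The paper itself gives no proof of this proposition --- it is quoted as a known result with a citation to Schwarz and Spilker --- so there is no argument in the paper to compare against; your write-up is the canonical one and fills that gap. One small point worth noting: the statement as printed in the paper contains typographical slips (it writes $f(k)$ where $f(n)$ is meant, and mixes the dummy indices $j$, $m$, $n$ in the exponentials); you have correctly read through these and proved the intended assertion, namely $f(n)=\sum_{j=1}^{k}g(j)\exp(2\pi i jn/k)$ with $g(m)=\frac{1}{k}\sum_{n=1}^{k}f(n)\exp(-2\pi i mn/k)$.
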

Any other notations and definitions we use will be introduced later as and when required.

\section{Problems}

For $r\in\N$, E. Alkan established an identity for the weighted average $$\frac{1}{k^{r+1}}\sum_{j=1}^kj^rc_k(j)$$ based on which he could establish many results in \cite{alkan1} and \cite{alkan2}. If we denote the above weighted average by $ S_r(k)$,  the identity given by E. Alkan is precisely the following:

\begin{prop}\label{prop:toth-main}
\begin{equation}\label{srk}
 S_r(k)=\frac{\varphi(k)}{2k}+\frac{1}{r+1}\sum_{m=0}^{\lfloor
r/2\rfloor}\binom{r+1}{2m}B_{2m}\prod_{p|k}(1-\frac{1}{p^{2m}})
\end{equation}

\end{prop}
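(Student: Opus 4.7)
\textbf{Proof plan for Proposition \ref{prop:toth-main}.}

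The plan is to open up $c_k(j)$ using the Möbius formula $c_k(j)=\sum_{d\mid(k,j)}d\,\mu(k/d)$, interchange summations, and then use Faulhaber's formula to turn the resulting power sum into a polynomial in Bernoulli numbers. Concretely, I would first write
\begin{equation*}
\sum_{j=1}^{k}j^{r}c_k(j)=\sum_{d\mid k}d\,\mu(k/d)\sum_{\substack{j=1\\d\mid j}}^{k}j^{r}
=\sum_{d\mid k}d^{\,r+1}\mu(k/d)\sum_{m=1}^{k/d}m^{r},
\end{equation*}
substituting $j=md$ in the inner sum.

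Next I would expand $\sum_{m=1}^{k/d}m^{r}$ by Faulhaber's formula (with the convention $B_1=-1/2$), obtaining a polynomial in $k/d$ of degree $r+1$ whose coefficients are Bernoulli numbers; plugging this in and interchanging the order of summation yields, after some rearrangement,
\begin{equation*}
\frac{1}{k^{r+1}}\sum_{j=1}^{k}j^{r}c_k(j)
=\frac{\varphi(k)}{k}+\frac{1}{r+1}\sum_{\ell=1}^{r}\binom{r+1}{\ell}B_{\ell}\,\frac{J_{\ell}(k)}{k^{\ell}},
\end{equation*}
where I use the standard identity $\sum_{d\mid k}d^{\ell}\mu(k/d)=J_{\ell}(k)$ from the Jordan totient proposition already stated and $\sum_{d\mid k}d\,\mu(k/d)=\varphi(k)$. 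The multiplicative formula for $J_{\ell}(k)$ gives $J_{\ell}(k)/k^{\ell}=\prod_{p\mid k}(1-1/p^{\ell})$.

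Finally I would separate out the $\ell=1$ contribution: its coefficient is $\binom{r+1}{1}B_{1}/(r+1)=-1/2$, so combined with $\varphi(k)/k$ it produces the leading $\varphi(k)/(2k)$ term. Using the well-known vanishing $B_{\ell}=0$ for odd $\ell\ge 3$, only even $\ell=2m$ survive in the remaining sum, which yields precisely
\begin{equation*}
S_{r}(k)=\frac{\varphi(k)}{2k}+\frac{1}{r+1}\sum_{m=0}^{\lfloor r/2\rfloor}\binom{r+1}{2m}B_{2m}\prod_{p\mid k}\Bigl(1-\frac{1}{p^{2m}}\Bigr),
\end{equation*}
where the $m=0$ term is included by absorbing the vanishing factor $\prod_{p\mid k}(1-1)$ for $k>1$ (the case $k=1$ is verified directly).

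The step I expect to require the most care is the bookkeeping in the second paragraph: correctly accounting for the $B_{1}=-1/2$ sign, ensuring the index shift between $\sum_{m=1}^{n}m^{r}$ and the standard Bernoulli polynomial identity is right, and confirming that the $\ell=0$ (i.e.\ $m=0$) term can indeed be written in the final form. Apart from this, every ingredient — the Möbius expansion of $c_k(j)$, Faulhaber's formula, the Jordan totient identity from Proposition~1, and the parity vanishing of the Bernoulli numbers — is standard and already available in the excerpt.
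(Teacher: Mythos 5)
Your proposal is correct and follows essentially the same route as the paper, which does not prove this proposition directly but obtains it as the $s=1$ case of its generalized identity for $S_r^{(s)}(k)$: that proof is exactly your Möbius expansion of $c_k(j)$, interchange of summation, the Faulhaber/Bernoulli power-sum formula, and the identification $\sum_{d\mid k}d^{\ell}\mu(k/d)=J_{\ell}(k)$ with $J_{\ell}(k)/k^{\ell}=\prod_{p\mid k}(1-p^{-\ell})$. The only cosmetic difference is that the paper uses the power-sum formula in a form that yields the $\varphi(k)/(2k)$ term at once, whereas you split off the $B_1$ contribution at the end; both bookkeepings are sound.
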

This identity appeared as Eq. 2.19 in \cite{alkan2}
. He used this identity to prove exact formulas for
certain mean square averages of special values of $L-$functions. Alkan described
these mean square averages in \cite{alkan1} and proved an asymptotic formula for $
\sum\limits_{k\leq x}S_r{k}$ in \cite{alkan2} based on the identity given above. How did he proceed to establish the identity is, that, he used Holder's evaluation of  the Ramanujan sums  \cite{holder}  and 
applied the formula 
\begin{equation}
 \sum\limits_{\substack{j=1\\ gcd(j,n)=1}}^{n}
j^r=\frac{n^{r+1}}{r+1}\sum\limits_{m=0}^{\lfloor
r/2\rfloor}\binom{r+1}{2m}\frac{B_{2m}}{n^{2m}}\prod_{p|n}(1-p^{2m-1}) \qquad
(n, r\in \N, n>1)
\end{equation}
(See \cite{singh} for details) and then considered the cases $r$ even and $r$
odd seperately. The identity (\ref{srk}) and its proof appears in \cite{alkan3}
also.

The proof of the identity (\ref{srk}) given in \cite{alkan1} was a little bit complicated. A simpler proof this identity (\ref{srk}) appeared in a paper \cite{toth} of L.\toth. In addition, in the same paper, \toth\ derived identities for some other weighted averages of the Ramanujan sums with weights concerning logarithms, values of arithmetic functions for gcd's, the Gamma function, the Bernoulli numbers, binomial coefficients, and the exponential function. 
Our focus will be on establishing the weighted averages of \toth\ for the generalized Ramanujan sums (of E. Cohen) replacing the classical Ramanujan sums. Considering that the genralized Ramanujan sum with $s=1$ becomes the classical Ramanujan sum, it may not be very surprising to note that, the identities we establish here will resemble \toth's identities when we put $s=1$. In fact, the way in which we proceed to establish the identities closely resembles the steps given in \cite{toth}. 

\section{Alkan's Identity}
As we mentioned earlier, the most important computations in \cite{alkan1} and \cite{alkan2} of E. Alkan depends heavily on the identity given in Proposition (\ref{prop:toth-main}). The identity in its final form consists of the Euler totient function and the Jordan totient function. We propose and prove a similar identity for the generalized Ramanujan sum consisting of similar type of functions. Our identity is precisely the following:
\begin{prop}
For $s, k, r\in\N$, define the weighted average $S_r^{(s)}(k)$ by 
\begin{equation}
S_r^{(s)}(k) = \frac{1}{k^{s(r+1)}}\sum_{j=1}^{k^s}j^{r}c^{(s)}_k(j)
\end{equation}
Then, 
\begin{equation}
S_r^{(s)}(k)=\frac{J_s(k)}{2k}+\frac{1}{r+1}\sum_{m=0}^{\lfloor
r/2\rfloor}\binom{r+1}{2m}B_{2m}\frac{J_s(k)}{k^{2ms}}
\end{equation}
\end{prop}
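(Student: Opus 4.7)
The plan is to mirror Tóth's treatment of the $s=1$ case, with the Möbius representation (\ref{eq:rs-gen-mu}) of the Cohen sum playing the role that Hölder's evaluation played for Tóth. Concretely, I would substitute $c_k^{(s)}(j) = \sum_{d\mid k,\, d^s\mid j} d^s \mu(k/d)$ into the defining double sum, interchange summations, and reduce the whole identity to the bookkeeping of Bernoulli power sums weighted against Möbius-weighted divisor sums.

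After the substitution and the change of variables $j = d^s \ell$ in the inner sum, one has
\begin{equation*}
\sum_{j=1}^{k^s} j^r c_k^{(s)}(j) \;=\; \sum_{d \mid k} d^{s(r+1)} \mu(k/d) \sum_{\ell=1}^{(k/d)^s} \ell^r .
\end{equation*}
I would then apply the standard Bernoulli expansion
\begin{equation*}
\sum_{\ell=1}^{N} \ell^r \;=\; \frac{N^r}{2} \;+\; \frac{1}{r+1} \sum_{m=0}^{\lfloor r/2 \rfloor} \binom{r+1}{2m} B_{2m}\, N^{r+1-2m},
\end{equation*}
valid for $r\geq 1$ and using the vanishing $B_{2m+1}=0$ for $m\geq 1$, with $N = (k/d)^s$. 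The prefactor $d^{s(r+1)}$ then pairs cleanly with $(k/d)^{sr}$ and $(k/d)^{s(r+1-2m)}$, producing terms of the form $d^s k^{sr}$ and $d^{2ms} k^{s(r+1-2m)}$ respectively; the $d$-dependence and the $k$-dependence separate.

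At that point, summing over $d\mid k$ collapses each Möbius-weighted piece into a Jordan totient via Proposition~1, namely $\sum_{d\mid k} d^t \mu(k/d) = J_t(k)$. Division by $k^{s(r+1)}$ then produces the target closed form. The main subtlety is the $m=0$ slot of the Bernoulli expansion, where the relevant Möbius sum degenerates to $\sum_{d\mid k} \mu(k/d) = [k=1]$ rather than a generic Jordan totient — exactly the place where $\prod_{p\mid k}(1-1/p^{\,0})$ performs the analogous collapse in Tóth's $s=1$ statement. Matching this boundary term against the shape of the stated identity is the principal point of care; everything else is a mechanical rearrangement of powers of $d$ and $k$ once the Möbius expansion of $c_k^{(s)}$ and the Bernoulli-polynomial identity are in place.
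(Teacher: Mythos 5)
Your proposal follows essentially the same route as the paper's own proof: expand $c_k^{(s)}(j)$ via the M\"obius representation, interchange the sums with $j=d^s\ell$, apply the Bernoulli power-sum formula to $\sum_{\ell=1}^{(k/d)^s}\ell^r$, and collapse the resulting M\"obius-weighted divisor sums into Jordan totients; your remark about the degenerate $m=0$ term ($J_0(k)$ vanishing for $k>1$) is also correct and matches how the paper's formula behaves. Note only that this computation --- exactly as in the paper's own final display --- yields $\frac{J_s(k)}{2k^s}$ and $\frac{J_{2ms}(k)}{k^{2ms}}$, so the statement's printed $\frac{J_s(k)}{2k}$ and $\frac{J_s(k)}{k^{2ms}}$ appear to be typographical slips rather than defects in your argument.
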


\begin{proof}
Using identity (\ref{eq:rs-gen-mu}), we get
\begin{eqnarray*}
S_r^{(s)}(k) &=& \frac{1}{k^{s(r+1)}}\sum_{j=1}^{k^s}j^{r}\sum_{\substack{d|j^s\\d|k}} d^s\mu(\frac{k}{d}) \\
&=& \frac{1}{k^{s(r+1)}}\sum_{d|k} d^s\mu(\frac{k}{d})\sum_{n=1}^{k^s/d^s}(nd^s)^r \\
&=& \sum_{d|k} \frac{d^{s(r+1)}}{k^{s(r+1)}} \mu(\frac{k}{d})\sum_{n=1}^{k^s/d^s}n^r
\end{eqnarray*}
Now from \cite{cohen4}, Prop. 9.2.12, for every $N, r \in \N$, we have 
 \begin{eqnarray*}
 \sum_{n=1}^{N}n^r &=& \frac{1}{r+1}\left(N^{r+1}+\frac{r+1}{2}N^r+\sum_{m=2}^{r}\binom{r+1}{m}B_mN^{r+1-m}\right)
 \end{eqnarray*}
 Note that  $B_0=1$, and $B_1=-\frac{1}{2}$ (See  \cite{apostol1}, section 12.12). Also, for $m\geq 1$, $B_{2m+1}=0$ (See \cite{apostol1} Theorem 12.16). Therefore, $B_3, B_5, \ldots $ are all equal to 0. So we can rewrite the above expression as 
 \begin{eqnarray*}
  \sum_{n=1}^{N}n^r &=& \frac{1}{r+1}\bigg((-1)^0 B_0 \binom{r+1}{0} N^{r+1-0}(-1)^1 B_1  \binom{r+1}{1} N^{r+1-1}\\ &+&\sum_{m=2}^{r}\binom{r+1}{m}B_m N^{r+1-m}\bigg)
  \end{eqnarray*}
  In this expression, after $B_1$, only even order Bernoulli numbers survive and so we need to consider only such terms in the summation. Rewriting using this information
   \begin{eqnarray*}
    \sum_{n=1}^{N}n^r &=& \frac{1}{r+1}(-1)^1 B_1  \binom{r+1}{1} N^{r+1-1} + \frac{1}{r+1}\sum_{m=0}^{\lfloor \frac{r}{2}\rfloor}\binom{r+1}{2m} B_{2j} N^{r+1-2m}\\
    &=& \frac{N^r}{2} + \frac{1}{r+1}\sum_{m=0}^{\lfloor \frac{r}{2}\rfloor}\binom{r+1}{2m} B_{2m} N^{r+1-2m}
    \end{eqnarray*}
  
Using this new expression, we get 
\begin{eqnarray*}
S_r^{(s)}(k) &=& \sum_{d|k} \frac{d^{s(r+1)}}{k^{s(r+1)}} \mu(\frac{k}{d}) 
\left[ \left(\frac{k^s}{d^s}\right)^r\frac{1}{2} + \frac{1}{r+1}\sum_{m=0}^{\lfloor \frac{r}{2}\rfloor}\binom{r+1}{2m} B_{2m} \left(\frac{k^s}{d^s}\right)^{r+1-2m} \right]\\
&=& \frac{1}{2}\sum_{d|k} \frac{d^s}{k^s} \mu(\frac{k}{d}) + \frac{1}{r+1} \sum_{m=0}^{\lfloor \frac{r}{2}\rfloor}\binom{r+1}{2m} B_{2m}  \sum_{d|k} \frac{d^{2ms}}{k^{2ms}} \mu(\frac{k}{d})\\
&=& \frac{1}{2} \frac{J_s(k)}{k^s} + \frac{1}{r+1} \sum_{m=0}^{\lfloor \frac{r}{2}\rfloor}\binom{r+1}{2m} B_{2m}  \frac{J_{2ms}(k)}{k^{2ms}}
\end{eqnarray*}
and this is what we claimed.
\end{proof}
When $s=1$, the above identity becomes the one given by Alkan (Prop. \ref{prop:toth-main}).
\section{$\log$ as weight}
We proceed now to generalize the other weighted averages computed by \toth (\cite{toth}).
The next one uses the logarithmic function as weight.
\begin{prop}
For every $s, k\in\N$, we have $$\frac{1}{k}\sum_{j=1}^{k} \log j \,\ckjgen = s\Lambda(k)+ \sum_{d|k} \frac{d^s}{k}\mukd\log \left(\frac{k}{d^s}\right)! $$
\end{prop}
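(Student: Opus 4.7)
The plan is to mirror \toth's argument for the $s=1$ case, using the divisor-sum formula \eqref{eq:rs-gen-mu} in place of the classical Holder identity. The first step is to unfold $c_k^{(s)}(j) = \sum_{d\mid k,\, d^s\mid j} d^s\mu(k/d)$ inside the left-hand side and swap the $j$- and $d$-summations, obtaining
\[
\frac{1}{k}\sum_{d\mid k} d^s\,\mu(k/d) \sum_{\substack{1\leq j\leq k\\ d^s\mid j}} \log j.
\]

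Next I would reparametrize the inner sum by $j = n d^s$, so $n$ ranges up to $k/d^s$, and use $\log(n d^s) = \log n + s\log d$ to split the inner sum into two pieces. The first piece, $\sum_n \log n$, equals $\log\bigl((k/d^s)!\bigr)$ and, once combined with the prefactor $d^s/k$, assembles exactly into the factorial term on the right-hand side. The second piece contributes a count of $n$ values times $s\log d$, which after cancellation against the prefactor leaves a divisor sum of the shape $s\sum_{d\mid k}\mu(k/d)\log d$.

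The final ingredient is the classical identity $\Lambda(k) = \sum_{d\mid k}\mu(k/d)\log d$, obtained by Mobius-inverting the elementary relation $\log n = \sum_{d\mid n}\Lambda(d)$ (equivalently, $\log = \Lambda\ast\mathbf{1}$). Applying this turns the residual divisor sum into $s\Lambda(k)$, matching the first term on the right-hand side. I expect the only delicate aspect of the proof to be the bookkeeping on the prefactors: one must check that the $d^s/k$ in front interacts with the number of terms $k/d^s$ in the inner sum so that no stray power of $k$ survives and the coefficient of $\Lambda(k)$ comes out to be exactly $s$. Everything else is routine and parallels \toth's $s=1$ proof in \cite{toth}.
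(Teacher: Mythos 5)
Your proposal is correct and follows essentially the same route as the paper's own proof: unfold $c_k^{(s)}(j)$ via the divisor-sum formula, interchange the $j$- and $d$-summations, substitute $j=nd^s$, split $\log(nd^s)=\log n+s\log d$ into the factorial piece and the piece $sk\sum_{d\mid k}\mu(k/d)\log d$, and finish with $(\mu*\log)(k)=\Lambda(k)$. The only point worth flagging is that both you and the paper implicitly take the inner sum to have exactly $k/d^s$ terms, which presupposes $d^s\mid k$; this is an issue with the statement's range of summation rather than a defect of your argument relative to the paper's.
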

\begin{proof}
We use identity (\ref{eq:rs-gen-mu}) to proceed:
\begin{eqnarray*}
\sum_{j=1}^{k} \log j \,\ckjgen &=& \sum_{j=1}^{k} \log j \, \sum_{\substack{d|j^s\\d|k}} d^s\mukd\\
&=& \sum_{d|k} d^s \mukd \sum_{m=1}^{\kbyds}\log(md^s)\\
&=& \sum_{d|k} d^s \mukd \sum_{m=1}^{\kbyds}[\log(1.d^s) + \log(2.d^s)+\ldots + \log(\kbyds d^s)]\\
\end{eqnarray*}
\begin{eqnarray*}
&=& \sum_{d|k} d^s \mukd \log[(\kbyds)! (d^s)^{\kbyds}]\\
&=& \sum_{d|k} d^s s \kbyds \log d \, \mukd + \sum_{d|k} d^s \mukd  \log(\kbyds)! \\
&=& sk\sum_{d|k} \mukd  \log d  + \sum_{d|k} d^s \mukd  \log(\kbyds)! 
\end{eqnarray*}
Note that $\sum\limits_{d|k} \mukd  \log d = (\mu * \log)(k) = \Lambda(k)$. So we get
\begin{eqnarray*}
\frac{1}{k}\sum_{j=1}^{k} \log j \,\ckjgen &=& s\Lambda(k)  + \sum_{d|k} \frac{d^s}{k} \mukd  \log(\kbyds)! 
\end{eqnarray*}
\end{proof}
\section{Function of generalized GCD as weight}
Now we proceed to establish an identity with weight as the generalized GCD function. But instead, we prove it generally for arithmetic functions composed with the GCD function. This, on taking the first function to be the function $N$ as a special case, we get the weight as the generalized GCD function.
\begin{prop}
For $s, k, j\in\N$  $$\sum_{j=1}^{k^s}f((j, k^s)_s)\ckjgen = J_s(k)[(f\circ N^s) * (\mu \circ N^s)] (k)$$
\end{prop}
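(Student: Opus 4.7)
The plan is to apply Holder's evaluation (\ref{ghi}) after partitioning the sum by the common value $e:=(j,k^s)_s$, and then to count the size of each fibre by recognising it as a Jordan totient.

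First I would rewrite the sum by grouping the $j\in[1,k^s]$ by their generalized GCD with $k^s$; this value is necessarily a divisor of $k$, so
\begin{equation*}
\sum_{j=1}^{k^s}f((j,k^s)_s)\,c_k^{(s)}(j) \;=\; \sum_{e\mid k}f(e)\sum_{\substack{1\le j\le k^s\\(j,k^s)_s=e}} c_k^{(s)}(j).
\end{equation*}
By (\ref{ghi}) the summand $c_k^{(s)}(j)=J_s(k)\mu(k/e)/J_s(k/e)$ is constant on each fibre, so the inner sum equals $\bigl(J_s(k)\mu(k/e)/J_s(k/e)\bigr)\cdot N_e$, where $N_e:=|\{\,j\in[1,k^s]:(j,k^s)_s=e\,\}|$.

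The crux of the argument is to prove $N_e=J_s(k/e)$. For this I would parametrize $j=e^sm$ with $m\in[1,(k/e)^s]$ (valid since $e\mid k$) and compute prime-by-prime
\begin{equation*}
v_p((e^sm,k^s)_s)=\min\bigl(v_p(k),\ v_p(e)+\lfloor v_p(m)/s\rfloor\bigr).
\end{equation*}
Reading off when this equals $v_p(e)$ for every prime $p$ gives the equivalence
\begin{equation*}
(e^sm,k^s)_s=e\ \Longleftrightarrow\ v_p(m)<s\ \text{for every prime } p\mid k/e\ \Longleftrightarrow\ (m,(k/e)^s)_s=1.
\end{equation*}
A straightforward inclusion--exclusion over the primes dividing $k/e$ then counts such $m$ as $(k/e)^s\prod_{p\mid k/e}(1-p^{-s})$, which equals $J_s(k/e)$ by the product formula for the Jordan totient.

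Plugging back, the factors $J_s(k/e)$ cancel in numerator and denominator, and the whole expression collapses to $J_s(k)\sum_{e\mid k}f(e)\mu(k/e)$; identifying this with the Dirichlet convolution on the right-hand side of the claim (after the renaming $e\mapsto d$) completes the argument. The principal obstacle is the prime-level equivalence that yields $N_e=J_s(k/e)$; once that is secured, everything else is routine bookkeeping with Holder's identity and the definition of the Jordan totient.
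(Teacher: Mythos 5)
Your proposal is correct and follows essentially the same route as the paper: apply the generalized Holder evaluation, group the $j$ by the value of $(j,k^s)_s$, recognize each fibre as having cardinality $J_s(k/e)$ so that it cancels the denominator, and read off the convolution. The only difference is that you spell out the fibre count $N_e=J_s(k/e)$ via the parametrization $j=e^sm$ and inclusion--exclusion, a step the paper simply asserts by writing the inner sum as $\sum_{(m,\,k^s/d^s)_s=1}1=J_s(k^s/d^s)$.
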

\begin{proof}
We use the generalized Holder's evaluation (\ref{ghi}) to start off.
\begin{eqnarray*}
\sum_{j=1}^{k^s}f((j, k^s)_s)\ckjgen = \sum_{j=1}^{k^s}f((j, k^s)_s) \ghi
\end{eqnarray*}
Now we group the terms in the sum according to $d^s= (j, k^s)_s$,

\begin{eqnarray*}
\sum_{j=1}^{k^s}f((j, k^s)_s)\ckjgen &=& J_s(k) \sum_{d^s|k^s} f(d^s) \frac{\muksds}{\jsksds} \sum_{\substack{m=1\\(m, \ksds)_s=1}}^{\ksds} 1\\
&=& J_s(k) \sum_{d^s|k^s} f(d^s) \frac{\muksds}{\jsksds} \jsksds\\
&=& J_s(k) \sum_{d^s|k^s} f(d^s) \muksds
\end{eqnarray*}
which is equivalent to the identity we have to prove.
\end{proof}

\section{The Gamma function as weight}
To establish the next average, we need a few preliminary lemmas. We have the following identity which appears in \cite{cohen4} as exercise 1.8.45:
\begin{lemma}
For $s, k\in\N$
$$\sum_{d|k}\frac{\mu(d)}{d} \log d = -\frac{\varphi(k)}{k}\sum_{\substack{p|k \\p\, prime}} \frac{\log p}{p-1}$$
\end{lemma}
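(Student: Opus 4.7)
The plan is to obtain this identity by logarithmic differentiation of the finite Euler-product identity
$$\sum_{d\mid k}\mu(d)\,d^{-z}=\prod_{p\mid k}\bigl(1-p^{-z}\bigr),$$
treated as a function of a single complex variable $z$. Both sides of the claim fall out after differentiating in $z$ and evaluating at $z=1$; the parameter $s$ does not really enter, so I would just quietly ignore it.

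First, I would set $F_k(z):=\sum_{d\mid k}\mu(d)d^{-z}$ and observe that, since the sum is over the finitely many squarefree divisors of $k$, it equals $\prod_{p\mid k}(1-p^{-z})$ by the standard multiplicativity/Mobius argument. Next, differentiating the sum term by term gives
$$F_k'(z)=-\sum_{d\mid k}\frac{\mu(d)\log d}{d^{z}},$$
while differentiating the product logarithmically yields
$$\frac{F_k'(z)}{F_k(z)}=\sum_{p\mid k}\frac{p^{-z}\log p}{1-p^{-z}}=\sum_{p\mid k}\frac{\log p}{p^{z}-1}.$$

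Then I would simply specialise to $z=1$. Since $F_k(1)=\prod_{p\mid k}(1-1/p)=\varphi(k)/k$, equating the two expressions for $F_k'(1)$ gives
$$-\sum_{d\mid k}\frac{\mu(d)\log d}{d}=\frac{\varphi(k)}{k}\sum_{p\mid k}\frac{\log p}{p-1},$$
which is exactly the claim after moving the sign across.

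There is essentially no real obstacle: the sum and product are finite, so termwise differentiation and the logarithmic-derivative identity require no analytic justification. The only thing one has to be slightly careful about is bookkeeping the sign when passing from $F_k'(z)=-\sum\mu(d)\log d\,d^{-z}$ to the stated form. If a reader prefers to avoid even this light use of calculus, the same identity can be obtained combinatorially by restricting the divisor sum on the left to squarefree $d$, factoring out each prime $p$ that contributes a $\log p$, and recognising the residual sum $\sum_{e\mid k,\ (e,p)=1}\mu(e)/e=\prod_{q\mid k,\,q\neq p}(1-1/q)$; but the differentiation route is shorter and is the one I would present.
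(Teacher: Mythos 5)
Your proof is correct, and it takes a genuinely different route from the paper. The paper in fact states this $s=1$ lemma without proof, citing it as an exercise in Cohen's book, and only proves the generalized version (with $d^s$, $J_s(k)/k^s$ and $p^s-1$) by a hands-on combinatorial argument: reducing to squarefree $k$, expanding $\prod_{p\mid k}(1-p^{-s})\sum_{p\mid k}\log p/(p^s-1)$ and $\sum_{d\mid k}\mu(d)d^{-s}\log d$ as signed sums over subsets of the prime divisors, and matching the terms grouped by which prime contributes the $\log$. Your approach — setting $F_k(z)=\sum_{d\mid k}\mu(d)d^{-z}=\prod_{p\mid k}(1-p^{-z})$, differentiating termwise on one side and logarithmically on the other, and evaluating at $z=1$ — is shorter, avoids all the subset bookkeeping, and requires no analytic care since everything is a finite sum or product. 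It also buys more than the statement asks for: evaluating the same identity $F_k'(z)=F_k(z)\sum_{p\mid k}\log p/(p^z-1)$ at $z=s$ instead of $z=1$ immediately yields the paper's generalized Lemma, since $F_k(s)=J_s(k)/k^s$; so your remark that ``$s$ does not really enter'' slightly undersells the method — $s$ enters only as the evaluation point. The one point worth stating explicitly is the trivial case $k=1$, where both sides are empty and equal to $0$.
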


We will give a generalization of this lemma in terms of Jordan totient function. This lemma could be of some other independent interest.

\begin{lemma}\label{prop:mu-log-d}
For $s, k\in\N$
$$\sum_{d|k}\frac{\mu(d)}{d^s} \log d = -\frac{J_s(k)}{k^s}\sum_{\substack{p|k \\p\, prime}} \frac{\log p}{p^s-1}$$
\end{lemma}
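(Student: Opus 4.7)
The plan is to recognize the left-hand side as (minus) the derivative of an explicit finite Dirichlet polynomial that also admits an Euler-type product expansion over the primes dividing $k$. Fix $k\in\N$ and, for real $z>0$, set
\[
G(z) \;:=\; \sum_{d\mid k} \frac{\mu(d)}{d^{z}}.
\]
Since $\mu$ is multiplicative and vanishes on non-squarefree integers, $G$ factors as the finite product
\[
G(z) \;=\; \prod_{p\mid k}\left(1 - \frac{1}{p^{z}}\right),
\]
and in particular $G(s) = J_s(k)/k^s$ by the product formula for $J_s$. Termwise differentiation of the sum defining $G$ gives
\[
G'(z) \;=\; -\sum_{d\mid k} \frac{\mu(d)\log d}{d^{z}},
\]
so the quantity we wish to evaluate is precisely $-G'(s)$.

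Next I would compute $G'(s)$ from the product form via the logarithmic derivative. A one-line calculation gives $\frac{d}{dz}\log(1 - p^{-z}) = \log p/(p^{z}-1)$, and summing over $p\mid k$ yields
\[
\frac{G'(z)}{G(z)} \;=\; \sum_{p\mid k}\frac{\log p}{p^{z}-1}.
\]
Specializing at $z=s$ and multiplying by $G(s) = J_s(k)/k^s$ gives a closed form for $G'(s)$, and attaching the minus sign produces exactly the claimed identity.

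I do not anticipate a genuine obstacle: the whole argument rests on the single observation that $G(z)$ has both an explicit Dirichlet-polynomial expansion (encoding the left-hand side through $d/dz$) and an explicit product expansion (encoding the right-hand side through the logarithmic derivative). Should one prefer an entirely elementary argument free of calculus, an alternative is induction on the number of distinct prime divisors of $k$: denoting the two sides by $L(k)$ and $R(k)$, a short check shows that for coprime $m,n$ both obey the derivation-type rule
\[
F(mn) \;=\; \frac{J_s(n)}{n^s}\,F(m) \;+\; \frac{J_s(m)}{m^s}\,F(n),
\]
while both equal $-\log p/p^s$ on every prime power $p^a$, so they agree on all $k$. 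Either route terminates in a couple of lines; the first is more conceptual and will be my preferred presentation.
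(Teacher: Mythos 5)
Your argument is correct, and it is genuinely different from --- and considerably cleaner than --- the proof in the paper. The paper proceeds by brute force: it reduces to $k$ squarefree, writes $k=p_1\cdots p_r$, expands both sides into sums indexed by subsets of the prime divisors, and matches the resulting terms, which requires a page of bookkeeping with ad hoc notation for families of prime subsets. Your first route packages the entire computation into the single identity
\[
G(z):=\sum_{d\mid k}\frac{\mu(d)}{d^{z}}=\prod_{p\mid k}\left(1-\frac{1}{p^{z}}\right),
\]
after which the lemma is literally the statement $-G'(s)=-G(s)\,\frac{G'(s)}{G(s)}$ together with $\frac{d}{dz}\log(1-p^{-z})=\log p/(p^{z}-1)$; the sign and the factor $J_s(k)/k^s=G(s)$ come out automatically, and the division by $G(s)$ is harmless since $G(s)=\prod_{p\mid k}(1-p^{-s})\neq 0$ (or can be avoided entirely by applying the product rule directly). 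Your second, calculus-free route is also sound: both sides are easily checked to satisfy the derivation rule $F(mn)=\frac{J_s(n)}{n^s}F(m)+\frac{J_s(m)}{m^s}F(n)$ for coprime $m,n$, and both equal $-\log p/p^{s}$ on prime powers, which pins them down on all of $\N$. What the paper's approach buys is that it stays entirely within finite elementary manipulations of the kind used elsewhere in the article; what yours buys is brevity, a transparent explanation of where the factor $\log p/(p^s-1)$ comes from (it is the logarithmic derivative of the local Euler factor), and an argument that generalizes immediately to $\sum_{d\mid k}\mu(d)f(d)\log d$ for any multiplicative $f$.
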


\begin{proof}
When we take all the divisors of $k$, only those $\mu(d)$ survive in the computation where $d$ is square free. Now recall that $$\frac{J_s(k)}{k^s} = \prod\limits_{\substack{p|k\\p\, prime}}\left(1-\frac{1}{p^s}\right).$$ So the quotient $\frac{J_s(k)}{k^s}$ considers each prime factor in a divisor $d$ of $k$ only once. Therefore, without any loss of generality we may assume that $k$ is square free. Let $k = \prod\limits_{p_i|k} p_i$ where $p_i$ are distinct primes and $D = \{p_1, \ldots, p_r\}$ denote the set of all prime divisors of $k$.
With this assumption and notation, the the RHS of the expression we want to verify becomes
\begin{eqnarray*}
\frac{J_s(k)}{k^s}\sum_{\substack{p_i\in D}} \frac{\log p_i}{p_i^s-1} &=& \prod_{p_i\in D}(1-\frac{1}{p_i^s})\sum_{p_i\in D} \frac{\log p_i}{p_i^s - 1}\\
&=&  \prod_{p_i\in D}\frac{p_i^s - 1}{p_i^s} \frac{\sum\limits_{p_i\in D} \left(\prod\limits_{\substack{p_j\in D\\p_j\neq p_i}}(p_j^s - 1) \log p_i \right)}{ \prod\limits_{p_i\in D} (p_i^s - 1)}\\
&=&   \frac{\sum\limits_{p_i\in D} \left(\prod\limits_{\substack{p_j\in D\\p_j\neq p_i}}(p_j^s - 1) \log p_i \right)}{ \prod\limits_{p_i\in D} p_i^s}
\end{eqnarray*}
We introduce a notation for convenience. $S_{n, p_i}$ will stand for the set of $n$ primes (where $n<r$) from $D$ and contains $p_i$ and $S_n$ will stand for the set of all $S_{n, p_i}$ where $p_i\in D$. Therefore, $S_{r, p_i} = D $. With this notation, we get
\begin{eqnarray*}
\frac{J_s(k)}{k^s}\sum_{\substack{p_i\in D}} \frac{\log p_i}{p_i^s-1} &=&  \left(\frac{1}{\prod\limits_{p_i\in D} p_i^s}\right) \left[\sum\limits_{p_i\in D} \log p_i \left(\prod\limits_{\substack{p_j\in D\\p_j\neq p_i}}(p_j^s - 1)  \right)\right]\\
&=&  \left(\frac{1}{\prod\limits_{p_i\in D} p_i^s}\right) \left[\sum\limits_{p_i\in D} \log p_i \left(\prod\limits_{\substack{p_j\in D\\p_j\neq p_i}}p_j^s - \sum\limits_{S_{r-1,p_i}\in S_{r-1}} \prod\limits_{\substack{p_j\in S_{r-1}\\p_j\neq p_i}}p_j^s + \ldots +\right)\right]\\
&=&   \sum\limits_{p_i\in D} \log p_i \left(\frac{1}{p_i^s} - \sum\limits_{S_{2,p_i}\in S_{2}}\frac{1}{\prod\limits_{\substack{p_j\in S_{2}}}p_j^s} + \ldots +\right)
\end{eqnarray*}

Computing the LHS now:
\begin{eqnarray*}
\sum_{d|k}\frac{\mu(d)}{d^s} \log d &=& -\frac{\log p_1}{p_1^s}\\
&+& -\left(\frac{\log p_1p_2}{p_1^s p_2^s} +\ldots + \frac{\log p_{r-1}p_r}{p_{r-1}^s p_r^s}\right)\\
&+& \frac{\log p_1p_2p_3}{p_1^s p_2^sp_3^s} + \ldots + \frac{\log p_{r-2}p_{r-1}p_r}{p_{r-2}^s p_{r-1}^s p_r^s}\\
&+& \ldots + (-1)^r \frac{\log p_1 \ldots p_r}{p_1^s \ldots p_r^s}
\end{eqnarray*}
grouping the terms on  $p_i$,
\begin{eqnarray*}
\sum_{d|k}\frac{\mu(d)}{d^s} \log d &=&  \sum\limits_{p_i\in D} \log p_i\bigg(-\frac{1}{p_i^s} + \sum_{S_{2, p_i}\in S_2}\frac{1}{\prod\limits_{p_j\in S_{2, p_i}}p_j^s} -  \sum_{S_{3, p_i}\in S_3}\frac{1}{\prod\limits_{p_j\in S_{3, p_i}}p_j^s}\\
&+& \ldots \\
&+& (-1)^k \sum_{S_{k, p_i}\in S_k}\frac{1}{\prod\limits_{p_j\in S_{k, p_i}}p_j^s}\\
&+& \ldots + (-1)^{r-1} \sum_{S_{r-1, p_i}\in S_{r-1}}\frac{1}{\prod\limits_{p_j\in S_{r-1, p_i}}p_j^s}\\
&+& (-1)^r \frac{1}{\prod\limits_{p_j\in D}p_j^s}\bigg)\\
\end{eqnarray*}
which is the same as the RHS computed above.
\end{proof}

The above lemma will be used in the proof of the next proposition.
\begin{prop}
For $s,k\in\N$
$$\frac{1}{J_s(k)}  \sum_{j = 1}^{k^s} \log\Gamma(j/k^s)\ckjgen = \frac{s}{2}\sum_{p|k} \frac{\log p}{p^s - 1} - \frac{\log 2\pi}{2}$$
\end{prop}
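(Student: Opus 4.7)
The plan is to mirror the analogous Gamma-weighted calculation that T\'oth does in the classical case, replacing the Gauss multiplication formula in the right spot and then harvesting the key lemma \ref{prop:mu-log-d} to dispose of a $\log$-weighted M\"obius sum.

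First, I would expand using identity (\ref{eq:rs-gen-mu}) and swap the order of summation:
\begin{equation*}
\sum_{j=1}^{k^s}\log\Gamma(j/k^s)\,c^{(s)}_k(j)
=\sum_{d\mid k} d^s\mu(k/d)\sum_{\substack{j=1\\ d^s\mid j}}^{k^s}\log\Gamma(j/k^s).
\end{equation*}
Substituting $j=md^s$ with $1\le m\le k^s/d^s$ turns the inner sum into $\sum_{m=1}^{N}\log\Gamma(m/N)$ for $N=(k/d)^s$. At this point I would quote the consequence of Gauss's multiplication formula,
\begin{equation*}
\sum_{m=1}^{N}\log\Gamma(m/N)=\frac{N-1}{2}\log(2\pi)-\frac{1}{2}\log N,
\end{equation*}
(valid since $\log\Gamma(1)=0$), specialised to $N=(k/d)^s$ and so contributing $\frac{(k/d)^s-1}{2}\log(2\pi)-\frac{s}{2}\log(k/d)$.

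After plugging this back in, the expression splits into two pieces:
\begin{equation*}
\frac{\log 2\pi}{2}\sum_{d\mid k}d^s\mu(k/d)\bigl[(k/d)^s-1\bigr]
\;-\;\frac{s}{2}\sum_{d\mid k}d^s\mu(k/d)\log(k/d).
\end{equation*}
For the first piece, I would use $\sum_{d\mid k}\mu(k/d)=0$ (assuming $k>1$) and the Jordan identity $\sum_{d\mid k}d^s\mu(k/d)=J_s(k)$ to collapse it to $-\frac{J_s(k)\log 2\pi}{2}$. For the second piece, the substitution $e=k/d$ rewrites it as $k^s\sum_{e\mid k}\mu(e)e^{-s}\log e$, and Lemma \ref{prop:mu-log-d} evaluates this to $-J_s(k)\sum_{p\mid k}\frac{\log p}{p^s-1}$. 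Dividing through by $J_s(k)$ then gives exactly the claimed identity.

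The main obstacle is essentially bookkeeping: correctly invoking the Gauss multiplication formula at the dilated scale $N=(k/d)^s$, and cleanly identifying the two Jordan-type Dirichlet convolutions that appear after interchanging sums. The $k=1$ case is mildly exceptional (the $\sum\mu(k/d)=0$ collapse fails), but the statement is vacuous or trivially handled there since both sides agree under the natural convention for an empty prime sum, so I would either assume $k\ge 2$ or dispatch $k=1$ as a one-line check.
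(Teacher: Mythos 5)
Your proposal is correct and follows essentially the same route as the paper: expand $c^{(s)}_k(j)$ via the M\"obius-type identity, interchange sums, apply the logarithm of the Gauss product formula $\prod_{m=1}^{N}\Gamma(m/N)=(2\pi)^{(N-1)/2}/\sqrt{N}$ at $N=k^s/d^s$, kill one term with $\sum_{d\mid k}\mu(k/d)=0$, and evaluate the remaining $\log$-weighted M\"obius sum with Lemma \ref{prop:mu-log-d}. Your explicit remark on the $k=1$ case is a small point the paper leaves implicit.
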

\begin{proof}
\begin{eqnarray*}
 \sum_{j = 1}^{k^s} \log\Gamma(j/k^s)\ckjgen &=& \sum_{j = 1}^{k^s} \log\Gamma(j/k^s)\sum_{\substack{d^s|j,\, d^s|k^s}}d^s\mukd\\
&=&  \sum_{\substack{ d^s|k^s}}d^s\mukd\sum_{m = 1}^{\ksds} \log\Gamma(md^s/k^s)\\
&=&  \sum_{\substack{ d^s|k^s}}d^s\mukd \log\prod_{m = 1}^{\ksds}\Gamma(\frac{m}{\ksds})
\end{eqnarray*}
Now \cite{cohen4}, Proposition 9.6.33 tells that $$\prod_{1\leq j\leq N}\Gamma(j/N) = \frac{(2\pi)^{(n-1)/2}}{\sqrt{n}}$$
Using this, we get 

\begin{eqnarray*}
 \sum_{j = 1}^{k^s} \log\Gamma(j/k^s)\ckjgen &=&  \sum_{\substack{ d^s|k^s}}d^s\mukd \log \frac{(2\pi)^{(\ksds-1)/2}}{\sqrt{\ksds}}\\
  &=&  \sum_{\substack{ d^s|k^s}}d^s\mukd\left(\frac{1}{2}(\ksds-1)\log 2\pi - \frac{1}{2}\log \ksds\right)\\
  &=& \frac{k^s}{2} \log 2\pi  \sum_{\substack{ d|k}}\mukd - \frac{1}{2}\log 2\pi \sum_{\substack{ d|k}}d^s\mukd  - \frac{1}{2} \sum_{\substack{ d|k}}d^s\mukd \log (\ksds)
 \end{eqnarray*}
 It is a standard fact that (see \cite{apostol1})  $\sum\limits_{ d|k}\mukd = 0$. Also, taking $k/d$ instead of $d$ as the divisors of $k$, we get

 \begin{eqnarray*}
  \sum_{j = 1}^{k^s} \log\Gamma(j/k^s)\ckjgen &=&  -\frac{1}{2}J_s(k)\log 2\pi - \frac{1}{2} \sum_{\substack{ d|k}}\ksds\mu(d) \log (d^s)\\
&=&  -\frac{1}{2}J_s(k)\log 2\pi - \frac{sk^s}{2} \times -\frac{J_s(k)}{k^s}\sum_{\substack{p|k \\p\, prime}} \frac{\log p}{p^s-1}\\
& &\text{ using Lemma (\ref{prop:mu-log-d})}\\
   \end{eqnarray*}
 \begin{eqnarray*}
&=&  J_s(k)\left(\frac{s}{2} \sum_{\substack{p|k \\p\, prime}} \frac{\log p}{p^s-1} - \frac{1}{2}\log 2\pi \right)\\
  \end{eqnarray*}
  and this completes the proof.
\end{proof}
\section{Bernoulli numbers as weight}
We now consider the weighted average involving Bernoulli numbers. Even though a similar identity  was stated in \cite{toth}, no proof was given there. Our next proposition gives a generalization of this proposition, and a special case to of turns out to be  \toth's identity given in \cite{toth}.

\begin{prop}
For $m\in\Z, m\geq 0$
$$\frac{1}{k^s}\sum_{j=0}^{k^s-1} B_m(\frac{j}{k^s})\ckjgen = \frac{B_m}{k^{sm}}J_{sm}(k)$$
\end{prop}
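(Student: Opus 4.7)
The plan is to mirror the strategy used for the earlier identities in the paper: unfold $c_k^{(s)}(j)$ via formula (\ref{eq:rs-gen-mu}), swap the order of summation, and evaluate the resulting inner sum using a classical identity for Bernoulli polynomials. Specifically, I would write
\begin{equation*}
\sum_{j=0}^{k^s-1} B_m\!\left(\tfrac{j}{k^s}\right) c_k^{(s)}(j)
= \sum_{j=0}^{k^s-1} B_m\!\left(\tfrac{j}{k^s}\right) \sum_{\substack{d\mid k \\ d^s\mid j}} d^s \mu(k/d)
= \sum_{d\mid k} d^s \mu(k/d) \sum_{\substack{0\le j<k^s \\ d^s\mid j}} B_m\!\left(\tfrac{j}{k^s}\right).
\end{equation*}
Since $d\mid k$ implies $d^s\mid k^s$, setting $j=nd^s$ with $n=0,1,\dots,k^s/d^s-1$ gives an inner sum over $B_m(n/N)$ with $N:=k^s/d^s$.

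The key step is the multiplication (Raabe) formula for Bernoulli polynomials,
$$\sum_{n=0}^{N-1} B_m\!\left(x+\tfrac{n}{N}\right) = N^{1-m}B_m(Nx),$$
which at $x=0$ yields $\sum_{n=0}^{N-1} B_m(n/N) = N^{1-m}B_m$. Substituting this back, the double sum collapses to
\begin{equation*}
\sum_{j=0}^{k^s-1} B_m\!\left(\tfrac{j}{k^s}\right) c_k^{(s)}(j)
= B_m \sum_{d\mid k} d^s \mu(k/d)\left(\tfrac{k^s}{d^s}\right)^{1-m}
= B_m\, k^{s(1-m)} \sum_{d\mid k} d^{sm}\mu(k/d).
\end{equation*}
Finally, invoking the Jordan-totient identity from the Notations section with order $sm$ in place of $s$, namely $J_{sm}(k)=\sum_{d\mid k} d^{sm}\mu(k/d)$, gives $B_m k^{s(1-m)} J_{sm}(k)$. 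Dividing by $k^s$ produces the claimed $\tfrac{B_m}{k^{sm}}J_{sm}(k)$.

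The only non-routine ingredient is the Raabe multiplication formula, which is a standard classical identity (and in particular appears in Apostol's book already cited); the remaining manipulations are the same kind of divisor-sum swap and change of variable used throughout the preceding sections. I do not anticipate a genuine obstacle — the main thing to be careful about is the edge case $m=0$ (where $B_0=1$, $B_0(x)\equiv 1$, and $J_0(k)=[k=1]$), which should be handled cleanly by the same calculation since $\sum_{n=0}^{N-1}1=N=N^{1-0}B_0$, and by the convention $k^0=1$. It is also worth observing that the sum is taken from $j=0$ (rather than $j=1$ as in the other averages), which is necessary so that Raabe's formula applies directly; the $j=0$ term contributes $B_m(0)\,c_k^{(s)}(0) = B_m\,J_s(k)$ and is genuinely needed for the identity to hold.
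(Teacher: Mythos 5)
Your proposal is correct and follows essentially the same route as the paper: unfold $c_k^{(s)}(j)$ via (\ref{eq:rs-gen-mu}), swap the sums, reindex $j=nd^s$, and apply the distribution formula $\sum_{n=0}^{N-1}B_m(n/N)=N^{1-m}B_m$ (the paper cites this as Cohen, Prop.\ 9.1.3; it is the same identity as Raabe's formula at $x=0$). Your version is in fact slightly cleaner, since you reindex directly over $0\le j<k^s$ and avoid the paper's back-and-forth shifting of the summation range using $B_m(0)=B_m(1)$.
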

\begin{proof}
\begin{eqnarray*}
\sum_{j=0}^{k^s-1} B_m(\frac{j}{k^s})\ckjgen &=& \sum_{j=0}^{k^s-1} B_m(\frac{j}{k^s})\, d^s\mukd\\
 &=& \sum_{d|k} d^s\mukd \sum_{\substack{j=0\\d^s|j}}^{k-1} B_m(\frac{j}{k})
\end{eqnarray*}
Now use the fact that $B_m(0) = B_m(1)$.
\begin{eqnarray*}
\sum_{j=0}^{k^s-1} B_m(\frac{j}{k^s})\ckjgen &=&  \sum_{d|k} d^s\mukd \left(\sum_{\substack{j=1\\d^s|j}}^{k^s} B_m(\frac{j}{k^s}) + B_m(\frac{0}{k^s}) - B_m(\frac{k^s}{k^s})\right)\\
&=&  \sum_{d|k} d^s\mukd \sum_{\substack{j=1\\d^s|j}}^{k^s} B_m(\frac{j}{k^s})\\
\end{eqnarray*}
\begin{eqnarray*}
&=&  \sum_{d|k} d^s\mukd \sum_{r=1}^{\ksds} B_m(\frac{rd^s}{k^s})\\
&=&  \sum_{d|k} d^s\mukd \sum_{r=1}^{\ksds} B_m(\frac{r}{\ksds})\\
&=&  \sum_{d|k} d^s\mukd \sum_{r=0}^{\ksds - 1} B_m(\frac{r}{\ksds}) - B_m(\frac{0}{\ksds}) + B_m(\frac{\ksds}{\ksds})\\
\end{eqnarray*}
From the properties  Bernoulli numbers  (\cite{cohen4}, Prop. 9.1.3), we have $$\sum_{j=0}^{k-1}B_m(\frac{j}{k}) = \frac{B_m}{k^{m-1}}$$
Using this, we get
\begin{eqnarray*}
\sum_{j=0}^{k^s-1} B_m(\frac{j}{k^s})\ckjgen &=& \sum_{d|k} d^s\mukd  \frac{B_m}{(\ksds)^{m-1}}\\
&=& \frac{B_m}{(k^s)^{m-1}} \sum_{d|k} d^{sm}\mukd  \\
&=& \frac{B_m}{(k^s)^{m-1}} J_{sm}(k) 
\end{eqnarray*}
Now the conclusion easily follows.

\end{proof}

\section{Binomial coefficients as weight}
Now we have one more weighted average with weights as binomial coefficients.
\begin{prop}
For $k\in\N$,
$$\sum_{j=0}^{k^s} \binom{k^s}{j} \ckjgen = 2^{k^s} \sum_{d|k}\mukd\sum_{l=1}^{d^s}(-1)^{l\ksds}\cos^{k^s} ( l\pi/d^s)$$
\end{prop}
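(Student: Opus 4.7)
The plan is to follow the strategy \toth\ used for the binomial-coefficient-weighted average of classical Ramanujan sums: start from the exponential representation of $\ckjgen$, interchange the order of summation with the sum over $j$, and collapse the inner $j$-sum by the binomial theorem. Writing
$$\ckjgen=\sum_{\substack{m=1\\(m,k^s)_s=1}}^{k^s}e^{2\pi ijm/k^s}$$
and swapping the order of summation gives
$$\sum_{j=0}^{k^s}\binom{k^s}{j}\ckjgen=\sum_{\substack{m=1\\(m,k^s)_s=1}}^{k^s}\bigl(1+e^{2\pi im/k^s}\bigr)^{k^s}.$$

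The second step is to simplify the bracketed $k^s$-th power using the half-angle factorization $1+e^{i\theta}=2\cos(\theta/2)e^{i\theta/2}$ with $\theta=2\pi m/k^s$. This gives $(1+e^{2\pi im/k^s})^{k^s}=2^{k^s}\cos^{k^s}(\pi m/k^s)e^{i\pi m}=2^{k^s}(-1)^m\cos^{k^s}(\pi m/k^s)$, so the intermediate identity becomes
$$\sum_{j=0}^{k^s}\binom{k^s}{j}\ckjgen=2^{k^s}\sum_{\substack{m=1\\(m,k^s)_s=1}}^{k^s}(-1)^m\cos^{k^s}(\pi m/k^s).$$

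The final step is to remove the coprimality constraint by M\"obius inversion. The key observation is that the set $\{d:d\mid k,\ d^s\mid m\}$ is closed under least common multiple, because $\mathrm{lcm}(d_1,d_2)^s=\mathrm{lcm}(d_1^s,d_2^s)$, so every such $d$ in fact divides $(m,k^s)_s$; consequently
$$[(m,k^s)_s=1]=\sum_{\substack{d\mid k\\d^s\mid m}}\mu(d).$$
Inserting this, swapping the two sums, and writing $m=ld^s$ with $1\le l\le \ksds$ yields
$$2^{k^s}\sum_{d\mid k}\mu(d)\sum_{l=1}^{\ksds}(-1)^{ld^s}\cos^{k^s}(\pi ld^s/k^s),$$
and the relabelling $d\mapsto k/d$ (so that $\mu(d)\mapsto\mu(k/d)$, $d^s\mapsto k^s/d^s$, and the range of $l$ becomes $1\le l\le d^s$) converts this into the claimed form. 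The only step requiring any thought is this M\"obius inversion, which rests on the lcm-closure observation above; the rest is mechanical.
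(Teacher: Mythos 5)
Your proof is correct, but it takes a genuinely different route from the paper's. The paper expands $c_k^{(s)}(j)$ by its divisor (M\"obius) representation $\sum_{d\mid k,\,d^s\mid j}d^s\mu(k/d)$, swaps summations to reduce the problem to the multisection sums $\sum_{m}\binom{k^s}{md^s}$, and then quotes Comtet's formula $\sum_{m=0}^{\lfloor n/r\rfloor}\binom{n}{mr}=\frac{2^n}{r}\sum_{l=1}^{r}\cos^n(l\pi/r)\cos(nl\pi/r)$, finishing with the observation that $\cos(k^sl\pi/d^s)=(-1)^{lk^s/d^s}$. You instead start from the exponential definition of $c_k^{(s)}(j)$, collapse the $j$-sum by the binomial theorem and the half-angle factorization $1+e^{i\theta}=2\cos(\theta/2)e^{i\theta/2}$, and only then perform the M\"obius inversion $[(m,k^s)_s=1]=\sum_{d\mid k,\ d^s\mid m}\mu(d)$, substitute $m=ld^s$, and relabel $d\mapsto k/d$. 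In effect you have reproved Comtet's multisection identity in situ via the roots-of-unity filter, so your argument is self-contained where the paper's relies on a citation; the price is that you must justify the indicator identity, which you do correctly via the lcm-closure of $\{d: d\mid k,\ d^s\mid m\}$ (the same fact the paper uses silently when it passes between $\sum_{d\mid(j,k^s)_s}$ and $\sum_{d\mid k,\,d^s\mid j}$ in its identity (2)). All your intermediate steps check out, including the endpoint terms $j=0$ and $j=k^s$, and the final relabelling reproduces the stated right-hand side exactly.
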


\begin{proof}
\begin{eqnarray*}
\sum_{j=0}^{k^s} \binom{k^s}{j} \ckjgen &=& \sum_{j=0}^{k^s} \binom{k^s}{j} \sum_{d|k,\, d^s|l} d^s\mukd\\
&=& \sum_{d|k} d^s\mukd\sum_{m=0}^{\ksds} \binom{k^s}{md^s} \\
&=& \sum_{d|k} d^s\mukd\sum_{m=0}^{\ksds} \binom{k^s}{md^s} \\
\end{eqnarray*}
Now we use the following formula appearing in \cite{comtet}, P. 84:
$$\sum_{m=0}^{\lfloor n/r\rfloor} \binom{n}{mr} = \frac{2^n}{r}\sum_{l=1}^{r}\cos^n(l\pi/r) \cos(nl\pi/r)$$
to get
\begin{eqnarray*}
\sum_{j=0}^{k^s} \binom{k^s}{j} \ckjgen &=& \sum_{d|k} d^s\mukd \frac{2^{k^s}}{d^s} \sum_{l=1}^{d^s}\cos^{k^s}(l\pi/d^s)\cos(k^sl\pi/d^s)
\end{eqnarray*}
Note that $k^s/d^s$ is an integer and so $\cos(k^sl\pi/d^s)$ is $+1$ if $\ksds l$ is even and $-1$ if it is odd. So we get the final expression which was claimed.
\end{proof}
\section{Exponential function as weight}
Though the following proposition  was stated in \cite{toth} as a remark, detailed proof was not given there. We  provide a proof here before generalizing the proposition.
\begin{prop}\label{prop:exp-toth}
For every $k, n \in \N$, 
$$\frac{1}{k}\sum_{j=1}^{k} exp(2\pi ij n/k)\ckj = \begin{cases}
1, \text {if } (k,n)=1\\
0, \text {if } (k,n) >1
\end{cases}$$
\end{prop}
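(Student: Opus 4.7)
The plan is to substitute the defining sum of $c_k(j)$ and then swap the order of summation. Writing
$$c_k(j)=\sum_{\substack{m=1\\ \gcd(m,k)=1}}^{k} e^{2\pi i j m/k},$$
the left-hand side of the claim becomes
$$\frac{1}{k}\sum_{\substack{m=1\\ \gcd(m,k)=1}}^{k}\sum_{j=1}^{k} e^{2\pi i j(n+m)/k}.$$
The inner sum is the standard finite geometric sum over $k$th roots of unity, which equals $k$ when $k\mid n+m$ and vanishes otherwise. So the whole expression collapses to counting
$$\#\{\,m\in\{1,\dots,k\}:\gcd(m,k)=1,\ m\equiv -n\pmod{k}\,\}.$$

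Exactly one $m\in\{1,\dots,k\}$ lies in the residue class $-n\pmod k$, and for that particular $m$ we have $\gcd(m,k)=\gcd(-n,k)=\gcd(n,k)$. Hence the count equals $1$ precisely when $\gcd(n,k)=1$ and equals $0$ otherwise, which is exactly $\theta_k(n)$.

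There is really no serious obstacle here; the only bookkeeping point is the degenerate case $k\mid n$, where the unique representative is $m=k$ itself, and one must observe that $\gcd(k,k)=k>1$ when $k>1$ (and the identity is also trivially correct when $k=1$). As an alternative route one could invoke the Fourier expansion proposition from Section 2: $\theta_k$ is $k$-periodic in $n$, so it admits a Fourier expansion whose coefficients a short direct computation identifies as $c_k(j)/k$, and Fourier inversion then reads off the claimed identity at once. Either route is quick; I would present the swap-of-summation argument because it keeps everything elementary and avoids appeals to additional machinery.
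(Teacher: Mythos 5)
Your proof is correct, but it takes a different route from the one in the paper. You expand $c_k(j)$ by its definition as a sum over reduced residues $m$, swap the two summations, and reduce everything to the orthogonality relation $\sum_{j=1}^{k}e^{2\pi i j(n+m)/k}=k\,[k\mid n+m]$, after which the identity becomes a counting statement: there is exactly one $m\in\{1,\dots,k\}$ with $m\equiv -n\pmod{k}$, and it is coprime to $k$ precisely when $n$ is. The paper instead invokes the Fourier expansion proposition for $k$-periodic arithmetic functions from Section 2: it computes that the Fourier coefficients of $\theta_k$ are $c_k(n)/k$ (which is immediate from the definition of the Ramanujan sum) and then reads off the claimed identity from the expansion together with uniqueness of the coefficients --- exactly the alternative you sketch at the end. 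The two arguments rest on the same underlying fact (orthogonality of the additive characters mod $k$), but yours is self-contained and elementary, while the paper's is shorter on the page at the cost of citing the inversion machinery; the paper's choice also has the advantage that the identical template transfers verbatim to the generalized sums $c_k^{(s)}(j)$ in the very next proposition, whereas your direct computation would need to be redone with $(m,k^s)_s=1$ in place of $\gcd(m,k)=1$. Your handling of the edge case $k\mid n$ is careful and correct.
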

\begin{proof}
We use the Fourier expansion of the functions $\theta_k(n)$ (defined in the section \emph{Notations}) and $c_k(n)$ to prove the identity. We have
\begin{eqnarray*}
\frac{1}{k}\sum_{j=1}^{k}\theta_k(j) exp(-2\pi i j n/k) &=& \frac{1}{k}\sum_{\substack{j=1\\(j,k)=1}}^{k} exp(-2\pi i j n/k) = \frac{c_k(n)}{k}. 
\end{eqnarray*}
Threfore Using the Fourier expansion identities, we get
$$\sum_{j=1}^{k}\frac{c_k(j)}{k} exp(2\pi i j n/k) = \theta_k(n)$$.
\end{proof}
The generalization follows easily:
\begin{prop}
For $k, n\in\N$, we have
$$\frac{1}{k^s} exp(2\pi i j n/k^s) \ckjgen=\begin{cases}
1 \text{ if } (k, n^s)_s = 1\\
0 \text{ if } (k, n^s)_s > 1
\end{cases} =:\theta^{(s)}_k(n)$$
\end{prop}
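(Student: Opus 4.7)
The plan is to mirror the proof of Proposition \ref{prop:exp-toth} using Fourier expansion on $\Z/k^s\Z$ in place of $\Z/k\Z$. Define the indicator $\vartheta(m) = 1$ if $(m, k^s)_s = 1$ and $\vartheta(m) = 0$ otherwise; this function is $k^s$-periodic in $m$, since any $d$ with $d \mid k$ automatically satisfies $d^s \mid k^s$, so the divisibility $d^s \mid m$ is unchanged when $m$ is replaced by $m + k^s$.

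First, I would rewrite the generalized Ramanujan sum (Definition \ref{defn:genrs1}) as
\begin{equation*}
c_k^{(s)}(n) = \sum_{\substack{m=1\\(m,k^s)_s=1}}^{k^s} e^{2\pi i n m / k^s} = \sum_{m=1}^{k^s} \vartheta(m)\, e^{2\pi i n m / k^s}.
\end{equation*}
Using the symmetry $(k^s - m, k^s)_s = (m, k^s)_s$ (which follows from $d \mid k \Rightarrow d^s \mid k^s$, so that $d^s \mid m$ iff $d^s \mid k^s - m$), the substitution $m \mapsto k^s - m$ gives
\begin{equation*}
c_k^{(s)}(n) = \sum_{m=1}^{k^s} \vartheta(m)\, e^{-2\pi i n m / k^s}.
\end{equation*}
Dividing by $k^s$ then identifies $c_k^{(s)}(n)/k^s$ as the $n$-th Fourier coefficient of $\vartheta$ in the Fourier expansion theorem for $k^s$-periodic functions cited earlier in the excerpt.

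Finally, by Fourier inversion applied to the $k^s$-periodic function $\vartheta$, we obtain
\begin{equation*}
\vartheta(n) = \frac{1}{k^s}\sum_{j=1}^{k^s} c_k^{(s)}(j)\, e^{2\pi i j n / k^s},
\end{equation*}
which is the claimed identity. The main obstacle is notational rather than substantive: one must track which slot of the generalized gcd $(\cdot, \cdot)_s$ carries the exponent $s$ and confirm that $\vartheta$ is both $k^s$-periodic and symmetric under $m \mapsto -m$ modulo $k^s$, so that the Fourier expansion proposition applies verbatim. Once these two observations are in place, the proof reduces to a single Fourier inversion, with no arithmetic required beyond the symmetry calculation.
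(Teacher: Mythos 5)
Your proposal is correct and follows essentially the same route as the paper: identify $c_k^{(s)}(n)/k^s$ as the $n$-th Fourier coefficient of the $k^s$-periodic indicator $\theta_k^{(s)}$ and then apply Fourier inversion. The only difference is that you explicitly justify the sign flip in the exponential via the symmetry $m \mapsto k^s - m$, a detail the paper leaves implicit.
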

\begin{proof}
\begin{eqnarray*}
\frac{1}{k^s}\sum_{j=1}^{k^s}\theta^{(s)}_k(j) exp(-2\pi i j n/k^s) &=& \frac{1}{k^s}\sum_{\substack{j=1\\(j,k^s)_s=1}}^{k^s} exp(-2\pi i j n/k^s) = \frac{c^{(s)}_k(n)}{k^s}. 
\end{eqnarray*}
Threfore Using the Fourier expansion identities, we get
$$\sum_{j=1}^{k^s}\frac{c^{(s)}_k(j)}{k^s} exp(2\pi i j n/k^s) = \theta^{(s)}_k(n)$$ which is what we require.
\end{proof}

\section{Generalization of a multivariate identity}
Finally we have a generalization of a multivariate identity given in \cite{toth}:

\begin{prop}
 Let $k_1, \ldots, k_n\in\N$, $k=lcm(k_1,\ldots,k_n)$ and let $s,r\in\N$. Then
 \begin{equation}
  S_r^{(s)}(k_1, \ldots, k_n)=\frac{J_s(k_1)\ldots J_s(k_n)}{2k^s}+\frac{1}{r+1}\sum_{m=0}^{\lfloor r/2\rfloor}  \binom{r+1}{2m}\frac{B_{2m}}{k^{2ms}}g_m(k_1,\ldots,k_n)
 \end{equation}
where
\begin{equation}
 g^{(s)}_m(k_1,\ldots k_n)=\sum_{d_1|k_1, \ldots, d_n|k_n}\frac{d_1^s\mu(k_1/d_1)\ldots d_n^s\mu(k_n/d_n)}{(lcm(d_1,\ldots,d_n))^{(1-2m)s}}
\end{equation}
is a multiplicative function in $n$ variables.
\end{prop}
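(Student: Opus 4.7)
The natural definition being extended is
$$S_r^{(s)}(k_1,\ldots,k_n) := \frac{1}{k^{s(r+1)}}\sum_{j=1}^{k^s} j^{r}\,c^{(s)}_{k_1}(j)\cdots c^{(s)}_{k_n}(j),$$
and my plan is to mirror the proof of the single-variable identity for $S_r^{(s)}(k)$ given earlier. First I apply identity (\ref{eq:rs-gen-mu}) to each factor $c^{(s)}_{k_i}(j)$, expand the product into an $n$-fold divisor-tuple sum, and exchange orders of summation; the resulting inner $j$-sum runs over $1\le j\le k^s$ subject to $d_i^s\mid j$ for every $i$, i.e.\ $lcm(d_1^s,\ldots,d_n^s)\mid j$. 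A prime-by-prime comparison of $p$-adic valuations gives $lcm(d_1^s,\ldots,d_n^s)=D^s$ where $D:=lcm(d_1,\ldots,d_n)$, and the chain $d_i\mid k_i\mid k$ ensures $D\mid k$, so $D^s\mid k^s$.

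Writing $j=tD^s$ with $1\le t\le k^s/D^s$, the inner sum becomes $D^{sr}\sum_{t=1}^{k^s/D^s} t^r$, to which I apply the same closed form
$$\sum_{t=1}^{N} t^r = \frac{N^r}{2} + \frac{1}{r+1}\sum_{m=0}^{\lfloor r/2\rfloor}\binom{r+1}{2m}B_{2m}\,N^{r+1-2m}$$
used in the univariate proof. With $N=k^s/D^s$, the $N^r/2$ contribution collapses to $k^{sr}/2$ (the $D$-powers cancel), and the resulting divisor sum separates as a product $\prod_{i=1}^n\sum_{d_i\mid k_i}d_i^s\mu(k_i/d_i)=\prod_i J_s(k_i)$; a final division by $k^{s(r+1)}$ then produces the leading term $J_s(k_1)\cdots J_s(k_n)/(2k^s)$. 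The remaining terms leave a factor $k^{s(r+1-2m)}/D^{s(1-2m)}$ inside the divisor sum, and pairing this with $\prod_i d_i^s\mu(k_i/d_i)$ reconstitutes $g^{(s)}_m(k_1,\ldots,k_n)$ exactly as defined; dividing by $k^{s(r+1)}$ turns $k^{s(r+1-2m)}$ into $k^{-2ms}$, matching the claim.

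For the multiplicativity of $g^{(s)}_m$ in $n$ variables, suppose $\gcd(k_1\cdots k_n,\,\ell_1\cdots\ell_n)=1$. Every divisor $d_i$ of $k_i\ell_i$ factors uniquely as $d_i=a_ib_i$ with $a_i\mid k_i$ and $b_i\mid\ell_i$, and since no prime divides both some $a_j$ and some $b_{j'}$, one has
$$lcm(a_1b_1,\ldots,a_nb_n) = lcm(a_1,\ldots,a_n)\cdot lcm(b_1,\ldots,b_n).$$
Combined with the multiplicativity of $\mu$ and of $d\mapsto d^s$, the defining $n$-fold sum factors as $g^{(s)}_m(k_1\ell_1,\ldots,k_n\ell_n)=g^{(s)}_m(k_1,\ldots,k_n)\cdot g^{(s)}_m(\ell_1,\ldots,\ell_n)$.

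The only genuinely non-routine ingredients are the two $lcm$ identities---$lcm(d_i^s)=lcm(d_i)^s$ in the main calculation and the splitting of $lcm$ across coprime parts for multiplicativity---both of which follow at once from comparing $p$-adic valuations. Everything else is bookkeeping: carrying the single lcm $D$ through a sum indexed by an $n$-tuple of divisors rather than a single divisor, which is the only place where the multivariate argument requires care beyond the univariate proof.
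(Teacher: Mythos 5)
Your proposal is correct and follows exactly the route the paper intends: the paper gives no proof of this proposition, stating only that it ``can be easily verified following the steps given in the proof of the single variable case,'' and your argument is precisely that verification, including the correct guess for the (undefined in the paper) multivariate $S_r^{(s)}$, the reduction of the divisibility conditions to $\operatorname{lcm}(d_1,\ldots,d_n)^s\mid j$, and the standard coprime-factorization argument for multiplicativity of $g_m^{(s)}$. Note that your computation yields $J_s(k_1)\cdots J_s(k_n)/(2k^s)$ for the leading term, which matches the proposition as stated and confirms that the $2k$ appearing in the single-variable statement and in the corollary is a typo for $2k^s$.
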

This can be easily verified following the steps given in the proof of the single variable case.
With $r=1$, we have

\begin{corollary} 
With notations as in the above proposition, if we let $E^{(s)}: = g_1$, then
$$ S^{(s)}_1(k_1, \ldots, k_n)=\frac{J_s(k_1)\ldots J_s(k_n)}{2k} + \frac{E^{(s)}(k_1,\ldots, k_n)}{2}$$
\end{corollary}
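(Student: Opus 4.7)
The corollary is an immediate specialization of the preceding multivariate proposition at $r = 1$, and my plan is to apply it and simplify. With $r = 1$ we have $\lfloor r/2 \rfloor = 0$, so the sum over $m$ in the proposition consists of the single term $m = 0$. At that term the ingredients collapse to $\binom{r+1}{2m} = \binom{2}{0} = 1$, $B_{2m} = B_0 = 1$, and $k^{2ms} = k^{0} = 1$. Combined with the outer factor $\frac{1}{r+1} = \frac{1}{2}$, the entire sum contributes exactly $\frac{1}{2}\, g^{(s)}_0(k_1,\ldots,k_n)$, so the proposition yields
\[
S^{(s)}_1(k_1,\ldots,k_n) = \frac{J_s(k_1)\cdots J_s(k_n)}{2k^s} + \frac{g^{(s)}_0(k_1,\ldots,k_n)}{2}.
\]
Setting $E^{(s)} := g^{(s)}_0$ then recovers the stated identity. (I read the subscript $1$ appearing in the corollary as a typographical slip for $0$, since the algebra of the $r=1$ case only exposes the $m=0$ member of the family $g^{(s)}_m$; the same remark applies to the denominator $2k$, which should be $2k^s$ by homogeneity in $s$.)

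If the preceding proposition is not taken on faith — the excerpt merely asserts that it ``can be easily verified following the steps given in the proof of the single variable case'' — then the plan is to run those steps in $n$ variables. I would expand each occurrence of $c^{(s)}_{k_i}(\cdot)$ via the $\mu$-sum identity (\ref{eq:rs-gen-mu}), interchange summation orders, and observe that the $n$ divisibility conditions $d_1^s \mid j,\ldots,d_n^s \mid j$ coalesce into the single condition $\mathrm{lcm}(d_1,\ldots,d_n)^s \mid j$. After substituting $j = \mathrm{lcm}(d_1,\ldots,d_n)^s\, t$, the inner sum $\sum_t t^r$ is handled by the same Bernoulli-number power-sum identity already invoked in the single-variable proof. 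Regrouping by the exponent $2m$ of the resulting Bernoulli number produces the weights $g^{(s)}_m(k_1,\ldots,k_n)$ precisely in the shape stated.

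The main obstacle, modest but genuine, is the $\mathrm{lcm}$ bookkeeping, which has no counterpart in the single-variable case where only one divisor $d$ is involved — this is exactly what the family $g^{(s)}_m$ was designed to package. Multiplicativity of $g^{(s)}_m$ in its $n$ arguments should then fall out from the multiplicativity of $\mu$ and of $d \mapsto d^s$, together with the fact that $\mathrm{lcm}$ factors cleanly over coprime blocks; checking this on products of coprime tuples $(k_1,\ldots,k_n)$ and $(k'_1,\ldots,k'_n)$ is the only residual calculation needed before reading off the corollary.
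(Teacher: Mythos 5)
Your specialization at $r=1$ is exactly how the paper obtains this corollary (it offers no separate argument beyond substituting $r=1$ into the preceding multivariate proposition), and your computation that only the $m=0$ term survives, contributing $\tfrac{1}{2}\,g^{(s)}_0(k_1,\ldots,k_n)$, is correct. You are also right on both diagnostic points: the subscript in $E^{(s)}:=g_1$ is a slip for $g_0$ (in T\'{o}th's $s=1$ original the function $E$ is precisely $g_0$, with $\mathrm{lcm}(d_1,\ldots,d_n)$ to the first power in the denominator), and the denominator $2k$ should read $2k^s$ to be consistent with the proposition it specializes.
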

\section{Further directions}
Though we have found several identities for various arithmetical functions combined with the generalized Ramanujan sums, we restricted ourselves to the finite summation case. If one looks into what happens when we extend this to an infinite series, it is possible that it may give us many more ``Ramanujan expansions'' of many well known arithmetical functions in terms of the generalized sums.
Also, it should be possible to derive similar identities based on the other generalizations of Ramanujan sums with the same techniques.

\end{document}